 \newtheorem{theorem}{Theorem}[section]
 \newtheorem{lemma}[theorem]{Lemma}
 \newtheorem{remark}[theorem]{Remark}
\begin{document}

\title{Generating the mapping class groups by torsions}

\author{Xiaoming Du}
\address{South China University of Technology,
  Guangzhou 510640, P.R.China}
\email{scxmdu@scut.edu.cn}

\keywords{mapping class group, generator, torsion, involution.}
\subjclass[2010]{57N05, 57M20, 20F38}

\thanks{This research is supported by NSFC (Grant No. 11401219 and 11471248) and by the
Fundamental Research Funds for the Central Universities in China.}

\maketitle

\begin{abstract}
  Let $S_g$ be a closed oriented surface of genus $g$ and let
  $\text{Mod}(S_g)$ be the mapping class group. When the genus is
  at least 3, $\text{Mod}(S_g)$ can be generated by torsion elements.
  We prove the following results: For $g \geq 4$, $\text{Mod}(S_g)$ can
  be generated by 4 torsion elements. Three generators are involutions and the
  forth one is an order 3 element. $\text{Mod}(S_3)$ can be generated
  by 5 torsion elements. Four generators are involutions and the
  fifth one is an order 3 element.
\end{abstract}

\section{Introduction}
Let $S_g$ be a closed oriented surface of genus $g$. The mapping class
group $\text{Mod}(S_g)$ is defined by $\text{Homeo}^{+}(S_g)/\text{Homeo}_{0}(S_g)$,
the group of homotopy classes of oriented-preserving homeomorphisms of
$S_g$.

The study of the generating set of the mapping class group of a closed
oriented surface begun in the 1930s. The first generating set of the mapping
class group was found by Dehn (\cite{De}). This generating set consist of
$2g(g-1)$ Dehn twists for genus $g \geq 3$. About a quarter of a century
later, Lickorish found a generating set consisting of $3g-1$ Dehn twists
for $g \geq 1$ (\cite{Li}). The number of Dehn twists in the generating
set was improved to $2g+1$ by Humphries (\cite{Hu}). In fact this is the
minimal number of the generators if we require that all of the generators
are Dehn twists. This is also proved by Humphries.

If we consider generators other than Dehn twists, it is possible to find
smaller generating sets. Wajnryb found that the mapping class group
$\text{Mod}(S_g)$ can be generated by two elements (\cite{Wa}), each of
which is some product of Dehn twists. One element in Wajynrb's generating
set has finite order. Korkmaz in \cite{Ko} gave another generating set
consisting of two elements. One of Korkmaz's generators is a Dehn twist
and the other one is a torsion element of order $4g+2$.

The mapping class group $\text{Mod}(S_g)$ can be generated only by torsion
elements. By an $involution$ in a group we simply mean any element of
order 2. Luo proved that for $g \geq 3$, $\text{Mod}(S_g)$ is generated
by finitely many involutions (\cite{Lu}). In Luo's result, the number of
the involutions grows linearly with $g$. Brendle and Farb found a universal
upper bound for the number of involutions that generate $\text{Mod}(S_g)$.
They proved that for $g \geq 3$, $\text{Mod}(S_g)$ can be generated by 7
involutions (\cite{BF}). Kassabov improved the number of involution
generators to 4 if $g \geq 7$, 5 if $g \geq 5$ and 6 if $g \geq 3$
(\cite{Ka}).

For the extended mapping class group, Stukow proved that $\text{Mod}^{\pm}(S_g)$
can be generated by three orientation reversing elements of order 2 (\cite{St}).

If we consider torsion elements of higher orders, not only involutions,
then we can reduce the number of generators. In fact,
Brendle and Farb in \cite{BF} also found a generating set consisting of 3
torsion elements. The minimal generating set of torsion elements was found
by Korkmaz. He showed that $\text{Mod}(S_g)$ can be generated by two torsion
elements, each of which is of order $4g+2$ (\cite{Ko}). For generating set
of smaller order, Monden found that for $g \geq 3$, $\text{Mod}(S_g)$ is
generated by 3 elements of order 3, or generated by 4 elements of order 4
(\cite{Mo}).

Since the group generated by two involution is a dihedral group, Farb and
Margalit in \cite{FM}, Kassabov in \cite{Ka} and Monden in \cite{Mo} mentioned
that the following problem is open:

\vspace{0.2cm}

\noindent \textbf{Problem:}
Whether or not $\text{Mod}(S_g)$ can be generated by three involutions?

\vspace{0.2cm}

In this paper, we give generating sets consisting of 3 involutions and a torsion
of order 3:

\begin{theorem}
\label{Main}

For $g \geq 4$, $\text{Mod}(S_g)$ can be generated by 4 torsion elements, among which
3 elements are involutions and the forth one is an order 3 torsion. For $g=3$,
$\text{Mod}(S_3)$ can be generated by 5 torsion elements, among which 4 elements are
involutions and the fifth one is an order 3 torsion.
\end{theorem}

\begin{remark}

When $g<7$, the number of involutions in Kassabov's result is at least 5.
We give a smaller generating set in these cases. For $g \geq 4$ cases,
our result uses the generating sets of the same form.

\end{remark}

\begin{remark}

Monden's result use only 3 torsions of order 3. Korkmaz's result use only 2
torsions of order $4g+2$. Our advantage is the use of involutions.

\end{remark}

\section{Preliminaries}
We use the convention of functional notation, namely, elements of the mapping class
group are applied right to left. The composition $fh$ means that $h$ is applied first.

Let $c$ be the isotopy class of a simple closed curve on $S_g$. Then the
(left-hand) $Dehn\;twist$ $T_c$ about $c$ is the homotopy class of the homeomorphism
obtained by cutting $S_g$ along $c$, twisting one of the sides by $2\pi$ to the
left and gluing two sides of $c$ back to each other. See Fig. 1.

\begin{figure}[th]
\centerline{\includegraphics[totalheight=2cm]{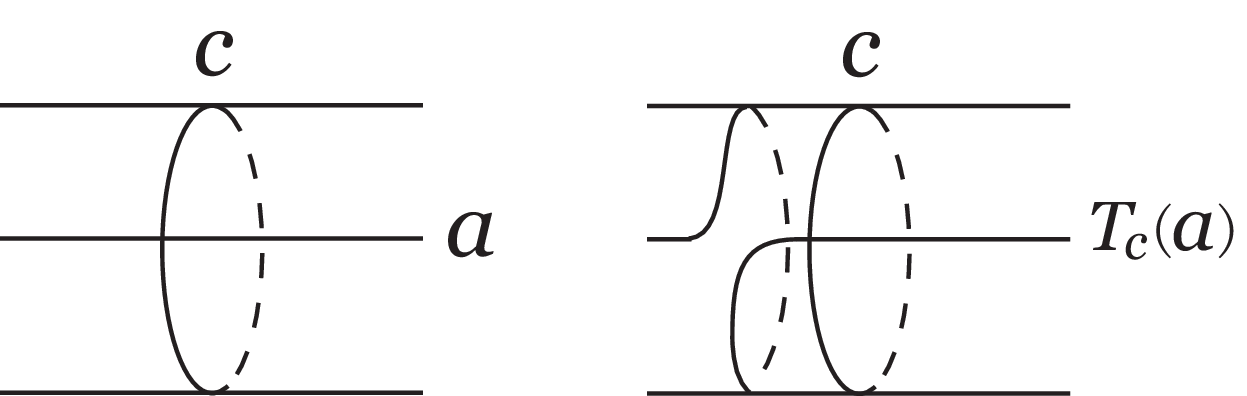}}
\vspace*{8pt}
\caption{Dehn twist.\label{Fig. 1}}
\end{figure}


We recall the following results (see, for instance, \cite{FM}):

\begin{lemma}[Conjugacy relation]

For any $f \in \text{Mod}(S_g)$ and any isotopy class $c$ of simple closed curves
in $S_g$, we have:

$$T_{f(c)}=f\,T_c\,f^{-1}.$$

\end{lemma}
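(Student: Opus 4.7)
The plan is to unwind the definition of a Dehn twist in terms of an explicit supported representative and then transport that representative by a representative of $f$. To start, I would pick a representative simple closed curve $\gamma$ of the isotopy class $c$, a closed annular neighborhood $N$ of $\gamma$, and a homeomorphism $\phi\colon S_g \to S_g$ supported on $N$ whose restriction $\phi|_N$ is the standard left twist of the annulus; by the definition recalled just above the lemma, $[\phi] = T_c$. In parallel, I would choose an orientation-preserving representative $F$ of $f$.

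Next, I would examine the conjugate $F\phi F^{-1}$. Since $\phi$ is the identity outside $N$, the conjugate is the identity outside $F(N)$, and $F(N)$ is a closed annular neighborhood of the curve $F(\gamma)$, which represents the isotopy class $f(c)$. On this annulus the conjugate factors as $(F|_N)\circ(\phi|_N)\circ(F|_N)^{-1}$, where $F|_N\colon N\to F(N)$ is an orientation-preserving homeomorphism of annuli. This identifies $F\phi F^{-1}$ on $F(N)$ with a left twist of the annulus, so $F\phi F^{-1}$ is a representative homeomorphism of the Dehn twist about $f(c)$. Passing to mapping classes yields $f\,T_c\,f^{-1} = T_{f(c)}$.

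The one point that deserves care is the direction of the twist in the last step: one must confirm that a left twist conjugated by an orientation-preserving annulus homeomorphism remains a left twist. This is the standard fact that a Dehn twist's direction depends only on the ambient orientation of $S_g$ and not on a chosen orientation of the curve or the annular neighborhood. Since every element of $\text{Mod}(S_g)$ is represented by an orientation-preserving homeomorphism, this is where the hypothesis enters, and once it is noted, no further computation is required.
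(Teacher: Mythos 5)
Your proof is correct and is exactly the standard argument: the paper itself states this lemma without proof, citing \cite{FM}, and your transport-of-support argument (conjugating a twist homeomorphism supported on an annular neighborhood and checking that orientation-preservation keeps the twist left-handed) is precisely the proof given in that reference. No gaps; the remark about where orientation-preservation enters is the right point to flag.
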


\begin{lemma}[Dehn twists along disjoint curves commute]

Let $a, b$ be two simple closed curves on $S_g$. If $a$ is disjoint from $b$, then

$$T_aT_b=T_bT_a.$$

\end{lemma}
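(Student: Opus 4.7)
The plan is to exploit the fact that a Dehn twist, as a homotopy class, admits a representative homeomorphism supported in an arbitrarily small annular neighborhood of its defining curve. Once both twists are supported in disjoint sets, they automatically commute as set-theoretic self-maps of $S_g$, and the commutation descends to the mapping class group.

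First I would choose, for the disjoint simple closed curves $a$ and $b$, disjoint tubular (annular) neighborhoods $N_a$ and $N_b$ in $S_g$. This is possible because $a \cap b = \varnothing$ and both are embedded $1$-submanifolds of a surface, so their normal bundles can be realized by disjoint open annuli (using, say, a Riemannian metric and sufficiently small $\eps$-neighborhoods, or the tubular neighborhood theorem).

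Next I would pick a specific representative $\varphi_a$ of $T_a$ supported in $N_a$, namely the standard twist homeomorphism that is the identity outside $N_a$ and performs the left $2\pi$ rotation inside $N_a$; similarly pick $\varphi_b$ supported in $N_b$. Since $\mathrm{supp}(\varphi_a) \subset N_a$, $\mathrm{supp}(\varphi_b) \subset N_b$ and $N_a \cap N_b = \varnothing$, each map is the identity on the support of the other. A direct pointwise check then shows $\varphi_a \varphi_b = \varphi_b \varphi_a$ as homeomorphisms of $S_g$: on $N_a$ both compositions equal $\varphi_a$, on $N_b$ both equal $\varphi_b$, and on the complement of $N_a \cup N_b$ both are the identity.

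Finally I would pass to isotopy classes, using that the map $\mathrm{Homeo}^{+}(S_g) \to \mathrm{Mod}(S_g)$ is a group homomorphism; hence $T_a T_b = [\varphi_a][\varphi_b] = [\varphi_a \varphi_b] = [\varphi_b \varphi_a] = T_b T_a$. The only real subtlety, and what I would flag as the main (mild) obstacle, is justifying the existence of disjoint annular neighborhoods and the standard-twist representative with the prescribed support; once those geometric facts are in hand, the algebraic identity is immediate.
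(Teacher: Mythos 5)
Your proof is correct and is exactly the standard argument for this fact (the paper itself offers no proof, simply citing Farb--Margalit, where the same support-disjointness argument appears): disjoint curves admit disjoint annular neighborhoods, the twist representatives supported there commute pointwise, and the identity descends to $\mathrm{Mod}(S_g)$. No gaps; the geometric facts you flag (disjoint tubular neighborhoods and the standard supported representative) are routine for embedded curves in a surface.
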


\begin{lemma}[Braid relation]

Let $a, b$ be two simple closed curves on $S_g$. If the geometric intersection
number of $a$ and $b$ is one, then

$$T_aT_bT_a=T_bT_aT_b.$$

\end{lemma}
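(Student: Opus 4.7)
The strategy is to deduce the algebraic identity $T_a T_b T_a = T_b T_a T_b$ from the conjugacy relation (the preceding lemma), once a single geometric fact is in hand, namely the equality
\[
(T_a T_b)(a) = b
\]
as isotopy classes of simple closed curves. Call this fact $(\ast)$. Assuming $(\ast)$, the conjugacy relation gives
\[
T_b \;=\; T_{(T_a T_b)(a)} \;=\; (T_a T_b)\, T_a\, (T_a T_b)^{-1},
\]
and multiplying both sides on the right by $T_a T_b$ yields $T_b T_a T_b = T_a T_b T_a$, which is precisely the braid relation. This portion of the argument is purely formal.

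The substantive work therefore consists in proving $(\ast)$, and my plan is to reduce it to a computation on a subsurface. Since $a$ and $b$ intersect exactly once, a closed regular neighborhood $N$ of $a \cup b$ in $S_g$ is a one-holed torus, and both Dehn twists $T_a$ and $T_b$ are supported inside $N$ (they restrict to the identity on the complement). It is therefore enough to compute the isotopy class of $(T_a T_b)(a)$ inside $N$. I would realize $N$ as the standard square with opposite sides identified, minus a small open disc, representing $a$ and $b$ as the meridian and longitude meeting transversely at a single point $p$. A direct surgery computation at $p$ shows that $T_b(a)$ is an embedded $(1,1)$-curve on $N$, homologous to $a+b$. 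Applying $T_a$ to this $(1,1)$-curve unwinds the $a$-direction once, and a picture-drawing argument in the fundamental domain shows the result is isotopic to $b$.

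The main obstacle is precisely this second step: one must keep careful track of orientations and of the handedness of the twist, since a sign slip can easily produce $b^{-1}$ or some other $(p,q)$-curve instead of the intended isotopy class. I would address this by drawing the picture explicitly in a fundamental domain of $N$ and then invoking the classification of (isotopy classes of) essential simple closed curves on a one-holed torus by primitive homology classes in $H_1(N;\mathbb{Z})$, which reduces the final identification of $(T_a T_b)(a)$ with $b$ to a homology calculation. With $(\ast)$ in hand, the two-line conjugation computation above completes the proof.
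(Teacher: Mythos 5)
Your argument is correct, and it is exactly the standard proof of the braid relation (the one in Farb--Margalit, which is the source the paper cites for this lemma; the paper itself gives no proof). Both the formal step (conjugacy relation applied to $(T_aT_b)(a)=b$) and your plan for the geometric fact $(\ast)$ are sound, and your use of the classification of essential simple closed curves in the one-holed torus by primitive classes in $H_1(N;\mathbb{Z})$ neatly disposes of the sign/handedness worry, since either handedness convention yields a curve homologous to $\pm b$ and Dehn twists do not depend on the orientation of the twisting curve.
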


Let $c_1, c_2, \dots, c_t$ be simple closed curves on $S_g$. If $i(c_j, c_{j+1})=1$
for all $1 \leq j \leq t-1$ and $i(c_j, c_k)=0$ for $|j-k|>1$, then we call
$c_1, c_2, \dots, c_t$ is a $chain$. Take a regular neighbourhood of their union.
The boundary of this neighbourhood consist of one or two simple closed curves,
depending on whether $t$ is even or odd. In the even case, denote the boundary curve
by $d$. In the odd case, denote the boundary curves by $d_1$ and $d_2$.
We have the following $chain$\;$relation$:

\begin{lemma}[Chain relation]
$$
\begin{array}{llll}
  (T_{c_1}T_{c_2}\,\dots\,T_{c_t})^{2t+2} &=& T_d            & t\;even,  \\
  (T_{c_1}T_{c_2}\,\dots\,T_{c_t})^{t+1}  &=& T_{d_1}T_{d_2} & t\;odd.
\end{array}
$$
\end{lemma}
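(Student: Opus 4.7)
The chain relation is a statement intrinsic to the regular neighborhood $N$ of the chain $c_1,\dots,c_t$, so my plan is to reduce to verifying it in $\text{Mod}(N)$ and then induct on $t$. The neighborhood $N$ is a compact subsurface of genus $\lfloor t/2 \rfloor$ with one boundary component $d$ (when $t$ is even) or two boundary components $d_1, d_2$ (when $t$ is odd), and both sides of each asserted identity are supported in $N$. Throughout, set $P_t := T_{c_1}T_{c_2}\cdots T_{c_t}$.

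The base case $t=1$ is immediate: $N$ is an annulus, $d_1$ and $d_2$ are both isotopic to $c_1$, and Lemma 2.2 gives $T_{d_1}T_{d_2} = T_{c_1}^{2} = P_1^{1+1}$. For $t=2$ the neighborhood is a genus-one surface with one boundary, and the required identity is the classical $(T_{c_1}T_{c_2})^{6} = T_d$. I would prove this using the standard description of $\text{Mod}(N)$ as a central extension of $SL(2,\mathbb{Z})$ with infinite cyclic kernel generated by $T_d$: the images of $T_{c_1}$ and $T_{c_2}$ are standard unipotent generators whose product has order $6$ in $SL(2,\mathbb{Z})$, so $(T_{c_1}T_{c_2})^{6}$ lies in the kernel, and a short Euler-class or winding-number computation identifies it as $T_d$ itself rather than some higher power.

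For the inductive step, given the result for $t-2$ (so that parity is preserved), I would use Lemmas 2.2 and 2.3 to manipulate $P_t$. The braid relation lets one slide the last pair of twists $T_{c_{t-1}}T_{c_t}$ through a full pass of the remaining word, and disjointness commutes these past any $T_{c_i}$ with $i < t-2$; after reassembly, the appropriate power of $P_t$ factors as a power of $P_{t-2}$ times one or two Dehn twists about the boundary of the enlarged subsurface, and the inductive hypothesis closes the argument. A cleaner alternative is to observe that $P_t$ induces the cyclic shift $c_i \mapsto c_{i+1}$ for $1\leq i<t$ (this itself follows by applying Lemmas 2.2 and 2.3 and the classical identity $T_aT_b(a)=b$ when $i(a,b)=1$), so that a sufficient power of $P_t$ fixes every curve $c_i$; by the Alexander method this power is then necessarily a product of twists about the boundary of $N$.

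The main obstacle is the bookkeeping: one must verify that the exponent $t+1$ (odd case) or $2t+2$ (even case) yields exactly one copy of each boundary twist, not zero and not a higher multiple. I would settle this via a single translation-number computation on an arc transverse to the chain, since the action of $P_t$ on such an arc determines the boundary contribution uniquely; matching this count against the asserted exponent completes the identification of both sides.
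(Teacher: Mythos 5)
This lemma is stated in the paper's preliminaries as a recalled standard fact, cited to \cite{FM} with no proof given, so there is no in-paper argument to compare against; your attempt therefore has to stand on its own, and as written it has real gaps. The most serious one is in your ``cleaner alternative.'' From $P_t(c_i)=c_{i+1}$ for $1\le i<t$ you conclude that ``a sufficient power of $P_t$ fixes every curve $c_i$,'' but this map is a shift, not a cyclic permutation: nothing you have said determines $P_t(c_t)$, and in general $P_t(c_t)$ is not any $c_j$ (already for $t=2$, $P_2(c_2)=T_{c_1}(c_2)$ is the $(1,1)$-curve). So no power of $P_t$ is forced to fix the chain by this observation alone. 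Moreover, even granting that some power fixes each $c_i$ up to isotopy, the conclusion ``hence it is a product of boundary twists'' is false as stated: $(T_{c_1}T_{c_2})^{3}$ fixes both $c_1$ and $c_2$ up to (unoriented) isotopy yet is a square root of $T_d$, not a power of $T_d$. The Alexander method on a surface with boundary needs arcs or orientation data to rule out such hyperelliptic-type elements, and this is precisely the case at hand.

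The first inductive route fares no better: the claim that after braid and commutation moves ``the appropriate power of $P_t$ factors as a power of $P_{t-2}$ times one or two boundary twists'' is asserted without any computation, and I do not see how Lemmas 2.2 and 2.3 alone produce such a factorization; this step is the entire content of the induction and cannot be waved through. Finally, both routes defer the identification of the exact exponents ($t+1$ versus $2t+2$, and the coefficient $1$ on each boundary twist) to an unspecified ``translation-number computation''---but pinning down these exponents is exactly what the lemma asserts, so deferring it leaves the statement unproved. A workable proof along standard lines would either run the Alexander method directly on a filling system of arcs and curves in the regular neighborhood $N$, or invoke the Birman--Hilden correspondence to lift the relation $(\sigma_1\cdots\sigma_t)^{t+1}=\Delta^2$ from the mapping class group of the $(t+1)$-marked disk, keeping careful track of how the boundary twist lifts in the one- and two-boundary cases; either way, the parts you have deferred are where the actual work lies.
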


Dehn discovered the lantern relation and Johnson rediscovered it (\cite{Jo}).
This is a very useful relation in the theory of maping class groups.

\begin{lemma}[Lantern relation]

Let $a, b, c, d, x, y, z$ be the curves showed in Fig. 2 on a genus zero
surface with four boundaries. Then

$$T_aT_bT_cT_d=T_xT_yT_z$$

\end{lemma}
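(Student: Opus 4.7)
The plan is to establish the lantern relation on the four-holed sphere $\Sigma$ via the \emph{Alexander method}: a mapping class of a compact surface with boundary is determined by its action, up to isotopy rel $\partial$, on a filling system of essential properly embedded arcs. It therefore suffices to pick such an arc system $\A$ and check that $T_aT_bT_cT_d$ and $T_xT_yT_z$ act identically on each element of $\A$.

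First I would fix the picture from Figure 2, with $a,b,c,d$ the four boundary circles and $x,y,z$ the three interior separating curves, each grouping the four boundaries into a pair of pairs. I would choose $\A$ to consist of three disjoint arcs that together cut $\Sigma$ into a disk, placed in minimal position with respect to each of $x,y,z$. The action of a Dehn twist $T_\gamma$ on an arc $\alpha$ is computed by splicing a copy of $\gamma$ into $\alpha$ at each transverse intersection, in the direction dictated by the twist.

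With these choices, the verification for each $\alpha\in\A$ is a finite diagrammatic exercise. The four boundary twists $T_a,T_b,T_c,T_d$ fix the interior of each arc up to isotopy but leave behind boundary-parallel loops at the endpoints, and one then checks that these loops are exactly the remnants of $T_xT_yT_z(\alpha)$ after isotoping all interior strands off the boundary circles. By the Alexander method this proves the two mapping classes coincide in $\text{Mod}(\Sigma)$, and the lantern relation then transfers along the inclusion $\Sigma \hookrightarrow S_g$ via the fact that Dehn twists commute with inclusions (Lemma on conjugacy applied to the inclusion-induced map).

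The main obstacle is the bookkeeping: tracking the many intersection points, the direction of each twist, and the orientation conventions consistently through the computations. This is not conceptually subtle, but it is where mistakes most easily creep in; drawing the arc surgeries carefully in each of the three cases, and verifying that the residual boundary twists match $T_aT_bT_cT_d$ with the correct multiplicities, is really the crux of the proof.
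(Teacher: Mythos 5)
The paper does not actually prove this lemma: it is recalled as a classical result of Dehn, rediscovered by Johnson, with the reader referred to \cite{Jo} and \cite{FM}. Your outline is precisely the standard proof given in the cited Primer of Farb and Margalit, namely the Alexander method on the four-holed sphere: choose a collection of disjoint essential arcs cutting $\Sigma_{0,4}$ into a disk, verify that both sides of the relation act identically on each arc up to isotopy rel $\partial$, and then push the relation forward under the homomorphism induced by the inclusion of the subsurface. The method is sound, and you correctly flag the two points where care is needed: the isotopies must fix the boundary pointwise (otherwise the boundary twists $T_a, T_b, T_c, T_d$ would be invisible on arcs and the check would degenerate), and the orientation convention for the twists must be kept consistent throughout. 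The one caveat is that, as written, the entire mathematical content of the lemma --- the diagrammatic verification that $T_aT_bT_cT_d(\alpha)$ and $T_xT_yT_z(\alpha)$ coincide for each arc $\alpha$ --- is asserted rather than carried out; you acknowledge this yourself, but it means your text is a correct proof strategy rather than a complete proof. Since the surgered arcs acquire a fair number of strands after three interior twists, the final isotopy matching the two pictures is genuinely where an error could hide, and a complete argument would need the explicit figures.
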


\begin{figure}[th]
\centerline{\includegraphics[totalheight=3cm]{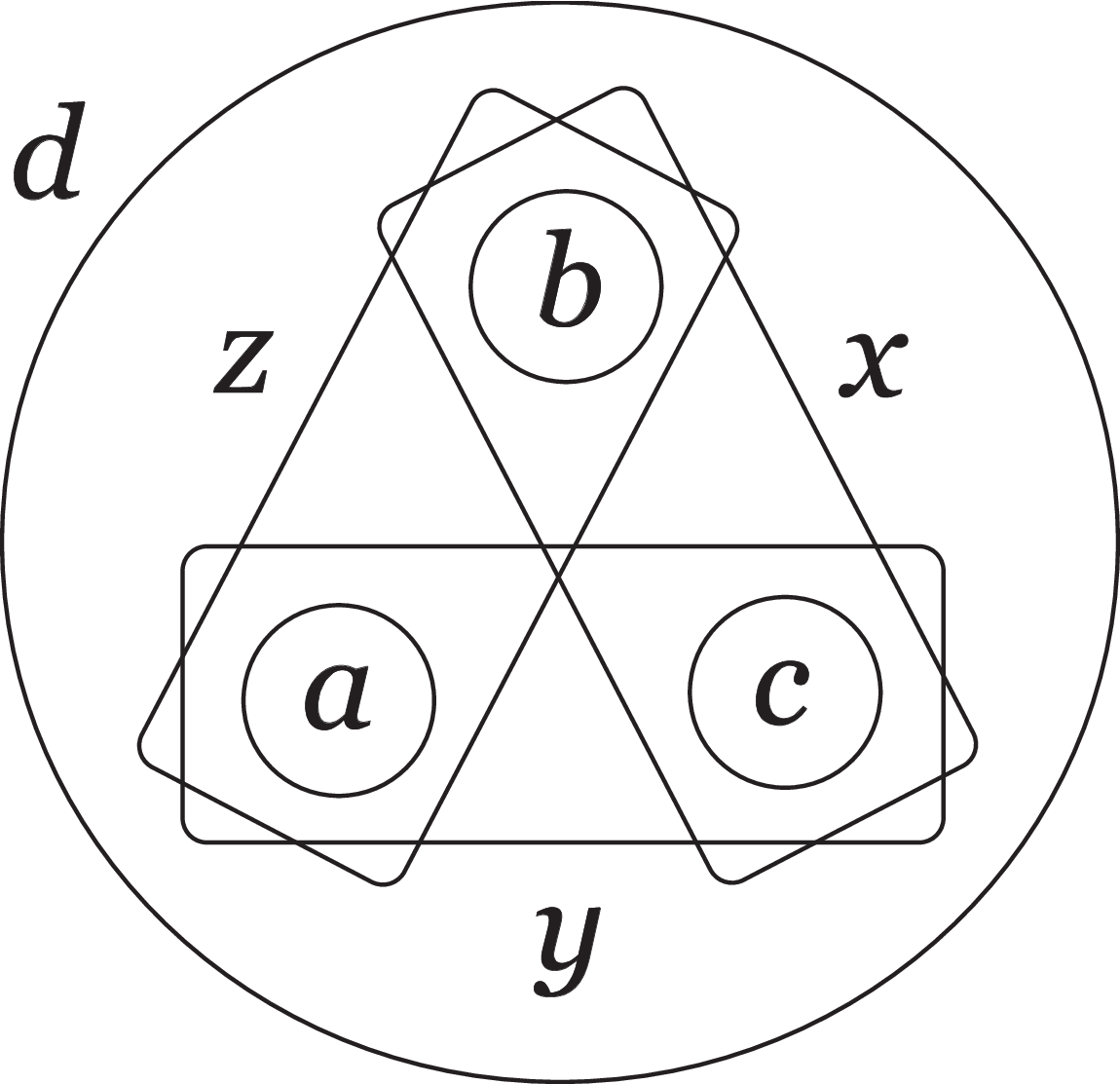}}
\vspace*{8pt}
\caption{lantern relation.\label{Fig. 2}}
\end{figure}

Lickorish's result on the set of generators is the following:

\begin{lemma}[Lickorish's generators]

The mapping class group $\text{Mod}(S_g)$ is generated by the set of Dehn twists
$\{T_{a_i}\,(1 \leq i \leq g)$, $T_{b_i}\,(1 \leq i \leq g)$,
$T_{a_i}\,(1 \leq i \leq g-1)\}$ (See Fig. 3).

\end{lemma}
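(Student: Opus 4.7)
The plan is to reduce the statement to two intermediate facts: (i) $\text{Mod}(S_g)$ is generated by the set of all Dehn twists along non-separating simple closed curves (Dehn's theorem), and (ii) the subgroup $H$ generated by Lickorish's $3g-1$ twists acts transitively on the isotopy classes of non-separating simple closed curves. Once both are established, the conjugacy relation $T_{f(c)} = f T_c f^{-1}$ finishes the job: for any non-separating $\gamma$ one picks $h \in H$ with $h(a_1) = \gamma$, so $T_\gamma = h T_{a_1} h^{-1} \in H$, and then (i) forces $H = \text{Mod}(S_g)$.

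For (i) I would argue by induction on the genus. The base cases of the sphere and torus are classical, and the inductive step cuts $S_g$ along a non-separating curve and uses the Birman exact sequence (or, equivalently, a capping argument) to extract an element fixing $a_1$ up to isotopy and reduce to a mapping class group of lower complexity. The inductive step leans on transitivity, so (i) and (ii) get intertwined in the write-up.

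The heart of the argument is (ii), and the key tool is a direct consequence of the braid relation: if $i(\alpha,\beta)=1$, then $(T_\alpha T_\beta)(\alpha)=\beta$. Thus any edge in the graph of non-separating curves with the intersection-one relation can be traversed inside $H$, provided both endpoints are already known to lie in the $H$-orbit of $a_1$. Starting from $a_1$, and walking along the chain $a_1, b_1, c_1, a_2, b_2, c_2,\dots, a_g, b_g$ of Lickorish curves (consecutive members of which meet once and are disjoint from the rest), one bootstraps to show first that every Lickorish curve lies in the $H$-orbit of $a_1$, and then that the $H$-orbit contains every non-separating simple closed curve on $S_g$.

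The main obstacle is this last connectivity step: one must produce, for an arbitrary non-separating $\gamma$, an explicit path of intersection-one moves from $\gamma$ to $a_1$ in which every intermediate curve is already verified to lie in the $H$-orbit. The standard approach is to put $\gamma$ into a normal form with respect to the Lickorish chain by a change-of-coordinates argument, and then run a descending induction on the total geometric intersection number $\sum_i i(\gamma, a_i) + \sum_i i(\gamma, b_i) + \sum_i i(\gamma, c_i)$, at each stage producing a pair $(\alpha,\beta)$ with $\alpha$ a Lickorish curve, $\beta$ in the orbit, $i(\alpha,\beta)=1$, and $(T_\alpha T_\beta)(\gamma)$ of strictly smaller intersection. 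The bookkeeping needed to ensure that such a reducing pair always exists and stays inside $H$ is the genuinely delicate part; everything else follows formally from Lemmas on conjugation, commutation, and braiding stated above.
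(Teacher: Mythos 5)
The paper does not prove this lemma at all: it is recalled verbatim from Lickorish's 1964 paper (reference [Li]) and used as a black box, so there is no proof of record to compare yours against. (Incidentally, the third family in the statement should read $T_{c_i}$, not $T_{a_i}$; you silently corrected this typo.) Judged on its own, your proposal is the standard modern route (essentially the one in Farb--Margalit): reduce to generation by twists about non-separating curves, then show the subgroup $H$ generated by the $3g-1$ Lickorish twists acts transitively on non-separating curves and conclude via $T_{f(c)}=fT_cf^{-1}$. That skeleton is correct. But as written it is a roadmap, not a proof: both load-bearing steps are deferred, and one of them contains a latent circularity worth spelling out.

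First, step (i) is itself a substantial theorem. The induction cannot run on genus alone: cutting along a non-separating curve produces a surface with boundary or punctures, so you must prove the statement for all $(g,n)$ simultaneously, and the inductive step needs the Birman exact sequence together with the connectivity of the non-separating curve (or arc) complex and the fact that point-pushing maps are products of two twists. None of this ``follows formally'' from the three relations quoted in the paper. Second, and more seriously, your edge-traversal mechanism for step (ii) only moves you between curves \emph{both} of whose twists are already known to lie in $H$: the element realizing $\alpha\mapsto\beta$ is $T_\alpha T_\beta$, so it witnesses nothing new unless $T_\beta\in H$ already. This is harmless for the Lickorish curves themselves (their twists are generators), but for an arbitrary non-separating $\gamma$ the intermediate curves on a path to $a_1$ have no a priori reason to have twists in $H$. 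Your fallback --- a descending induction on $\sum i(\gamma,\cdot)$ using a reducing pair $(\alpha,\beta)$ with $T_\alpha,T_\beta\in H$ --- is the right fix and is exactly Lickorish's original twist-reduction lemma, but the existence of such a reducing pair in every non-terminal configuration is the entire content of that lemma; it is the one genuinely hard case analysis in the theorem, and you have labelled it ``delicate bookkeeping'' rather than supplied it. Until that lemma is proved, the argument does not close.
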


\begin{figure}[th]
\centerline{\includegraphics[totalheight=5cm]{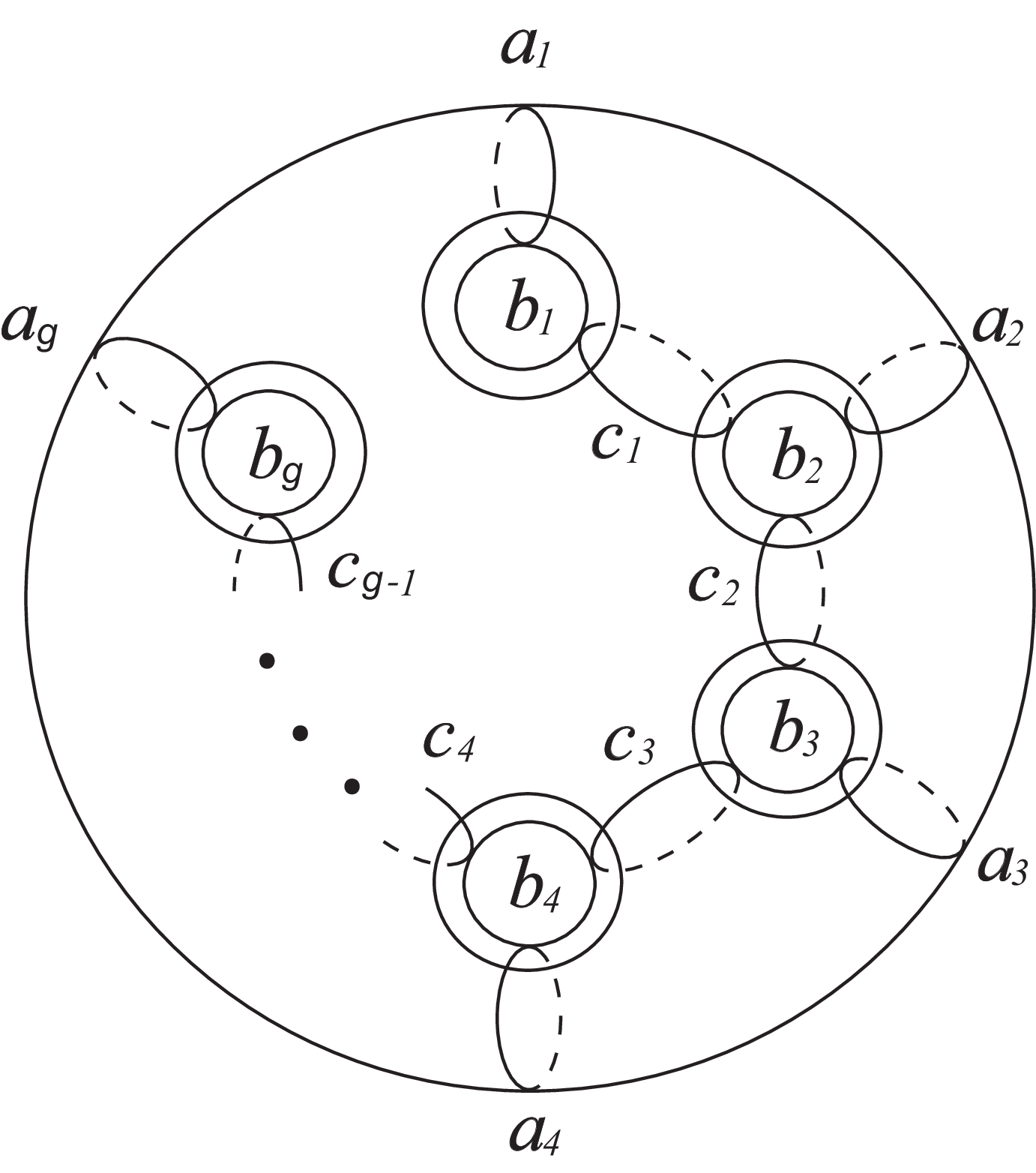}}
\vspace*{8pt}
\caption{Lickorish's curves.\label{Fig. 3}}
\end{figure}

\section{Proof of the main result}

Now we are ready to prove the main result of this paper.

\vspace{0.2cm}

\begin{proof}(The proof of Theorem \ref{Main})

\vspace{0.2cm}

According to Lickorish's theorem, we only need to construct a set of torsion elements
such that each element in the set of Lickorish$'$s generators $\{T_{a_i}\,(1 \leq i \leq g)$,
$T_{b_i}\,(1 \leq i \leq g)$, $T_{c_i}\,(1 \leq i \leq g-1)\}$ can be generated by
these torsion generators. By the conjugacy relation of Dehn twists, if the group $G$ generated
by some torsions has the following two properties, then it is $\text{Mod}(S_g)$:

property\;1, under the action of $G$, ${a_i}\,'s, {b_i}\,'s, {c_i}\,'s$ are in the same orbit;

property\;2, some $T_{a_i}$ is in $G$.

\vspace{0.2cm}

There are involutions $f_1, f_2$ on the surface such that $f_1$ and $f_2$ are
$\pi$-rotations and $f_2f_1$ is an order $g$ element, hence $f_2f_1$ permutes
${a_i}\,'s$, ${b_i}\,'s$, ${c_i}\,'s$ respectively (See Fig. 4).

\begin{figure}[th]
\centerline{\includegraphics[totalheight=7cm]{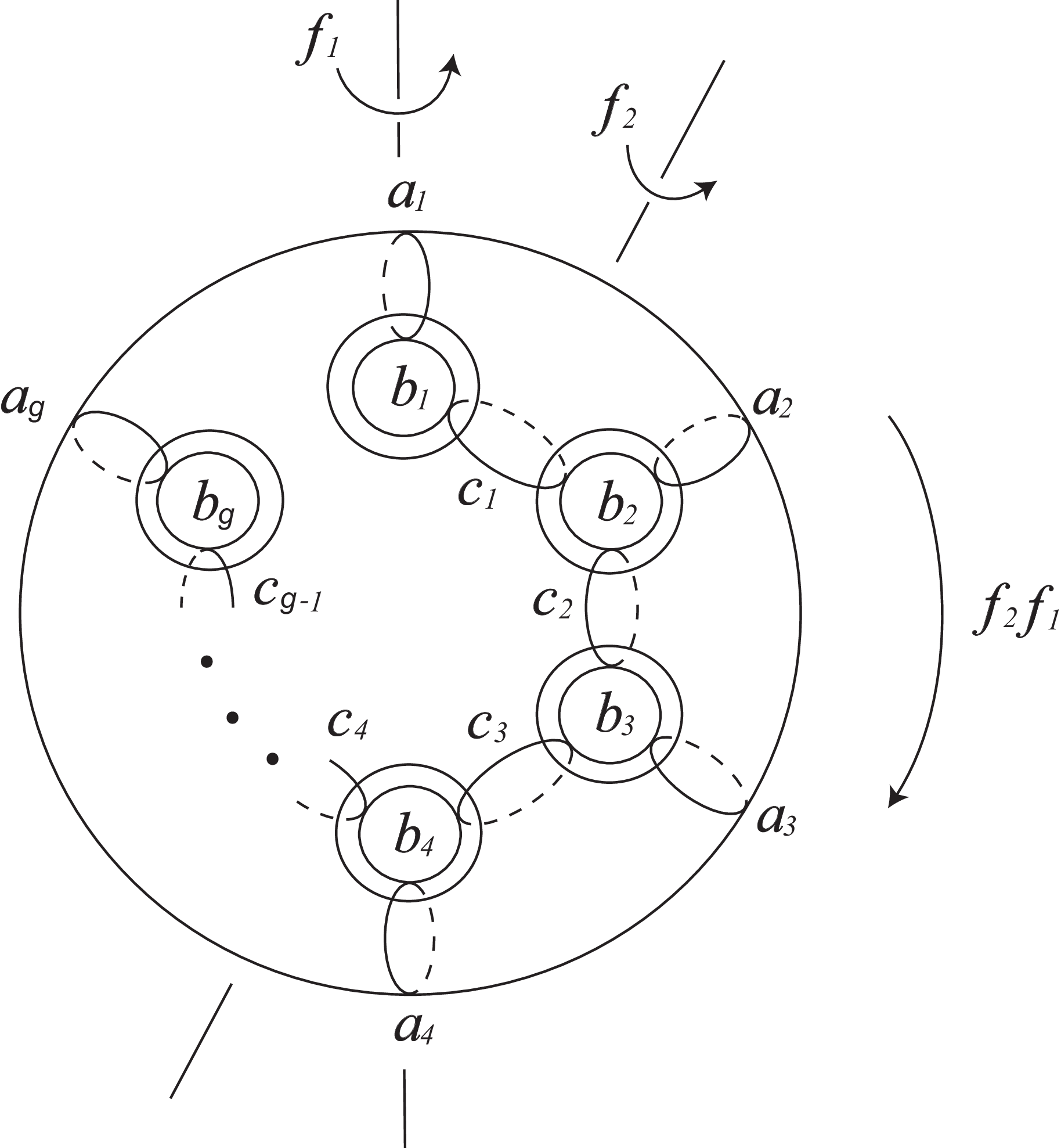}}
\vspace*{8pt}
\caption{involutions.\label{Fig. 4}}
\end{figure}

Meanwhile, in the lantern relation (see the previous Fig. 2), since each curve in
$\{a, b, c, d\}$ is disjoint from $x, y, z$, and each curve in $\{a, b, c, d\}$ is
disjoint from each other, the Dehn twists along them are commutative. The lantern relation
can be rewritten as

$$T_d=(T_xT_a^{-1})(T_yT_b^{-1})(T_zT_c^{-1}).$$

Locally, there is a $(2\pi/3)$-rotation $h$ on the genus zero surface with four boundary,
mapping the curve $d$ to itself, sending $a$ to $b$, $b$ to $c$, $c$ to $a$,
$x$ to $y$, $y $ to $z$ and $z$ to $x$. Hence

$$T_d=(T_xT_a^{-1})(hT_xT_a^{-1}h^{-1})(h^2T_xT_a^{-1}h^{-2}).$$

Compare Fig. 4 to Fig. 2, there is a a genus zero subsurface with four boundaries
$\{a_1$, $c_1$, $c_2$, $a_3\}$. $a_2$ separates $\{a_1, c_1\}$ from
$\{c_2, a_3\}$ on this subsurface. The position of $\{a_1, c_1, a_2\}$ on the
subsurface in Fig. 4 is like the position of $\{a, d, x\}$ in Fig. 2.
$T_d$ and $T_xT_a^{-1}$ in Fig. 2 is like $T_{c_1}$ and $T_{a_2}T_{a_1}^{-1}$.
If we can extend $h$ to a global order 3 homeomorphism $f_3$ on $S_g$, then

$$T_{c_1}=(T_{a_2}T_{a_1}^{-1})(f_3T_{a_2}T_{a_1}^{-1}f_3^{-1})(f_3^2T_{a_2}T_{a_1}^{-1}f_3^{-2}).$$

Now we decompose $T_{a_2}T_{a_1}^{-1}$ into the product of two
involutions $f_2$ and $T_{a_1}f_2T_{a_1}^{-1}$:

$$T_{a_2}T_{a_1}^{-1}=(f_2T_{a_1}f_2)T_{a_1}^{-1}=f_2(T_{a_1}f_2T_{a_1}^{-1}).$$

Notice that $f_2f_1$ send $c_1$ to $c_2$ and $f_3$ permutes $c_2, a_1$ and $a_3$.
Hence the subgroup $G \leq \text{Mod}(S_g)$ generated by $\{f_1$, $f_2$,
$T_{a_1}f_2T_{a_1}^{-1}$, $f_3\}$ includes all the $T_{a_i}\,'s$ and $T_{c_i}\,'s$.
It remains to show how we get $T_{b_i}\,'s$ by torsion elements.
We only need to construct a torsion sending some $a_i$ to $b_i$.

In the case of genus $g \geq 4$, we make the global order 3 homeomorphism $f_3$
sending some $a_i$ to $b_i$ as follows. See Fig. 5.

\begin{figure}[th]
\centerline{\includegraphics[totalheight=4cm]{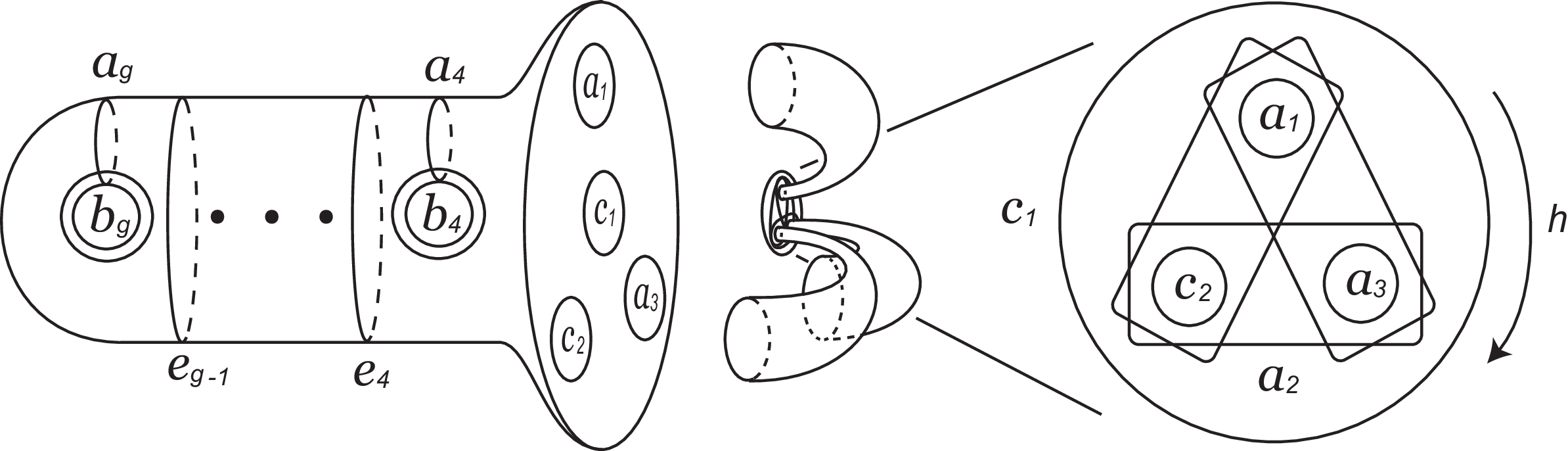}}
\vspace*{8pt}
\caption{order 3 map of the surface.\label{Fig. 5}}
\end{figure}

$h$ is an order 3 local homeomorphism on the genus zero subsurface with four boundaries.
Each point on this subsurface (including the point on the boundary) has period 3
except the fix point at the center.
The complement of the such a subsurface is a genus $g-3$ subsurface with also four
boundaries $a_1, a_3, c_1, c_2$. On the complement, we need to construct a periodic map with the
following properties:
(1) $c_1$ is mapped to itself;
(2) $a_1, a_3, c_2$ are permuted cyclicly;
(3) each point on the complement subsurface (including the point on the boundary)
has period 3 except the fix points;
(4) some $a_i$ is sent to $b_i$.

We cut the complement along curves $e_4, \dots, e_{g-1}$ into $g-3$ pieces, each of which is of genus 1. See Fig. 5.
When $g \geq 5$, such pieces can be divided into 3 classes:
class (i): has only one boundary $e_{g-1}$;
class (ii): has 5 boundaries $e_4, a_1, a_3, c_2, c_1$;
class (iii): has 2 boundaries $e_{i-1}, e_{i}$.
When $g = 4$, the complement has only one piece with 4 boundaries $a_1, a_3, c_2, c_1$.

To construct the periodic maps of order 3 on such pieces, sending some meridian $a_i$ to the
longitude $b_i$, take the genus 1 surfaces as the quotient space of the hexagon (perhaps with holes)
gluing the opposite sides. See Fig. 6. The $2\pi/3$-rotation of the plane is the obvious homeomorphism
on the quotient space we want.

\begin{figure}[th]
\centerline{\includegraphics[totalheight=4.5cm]{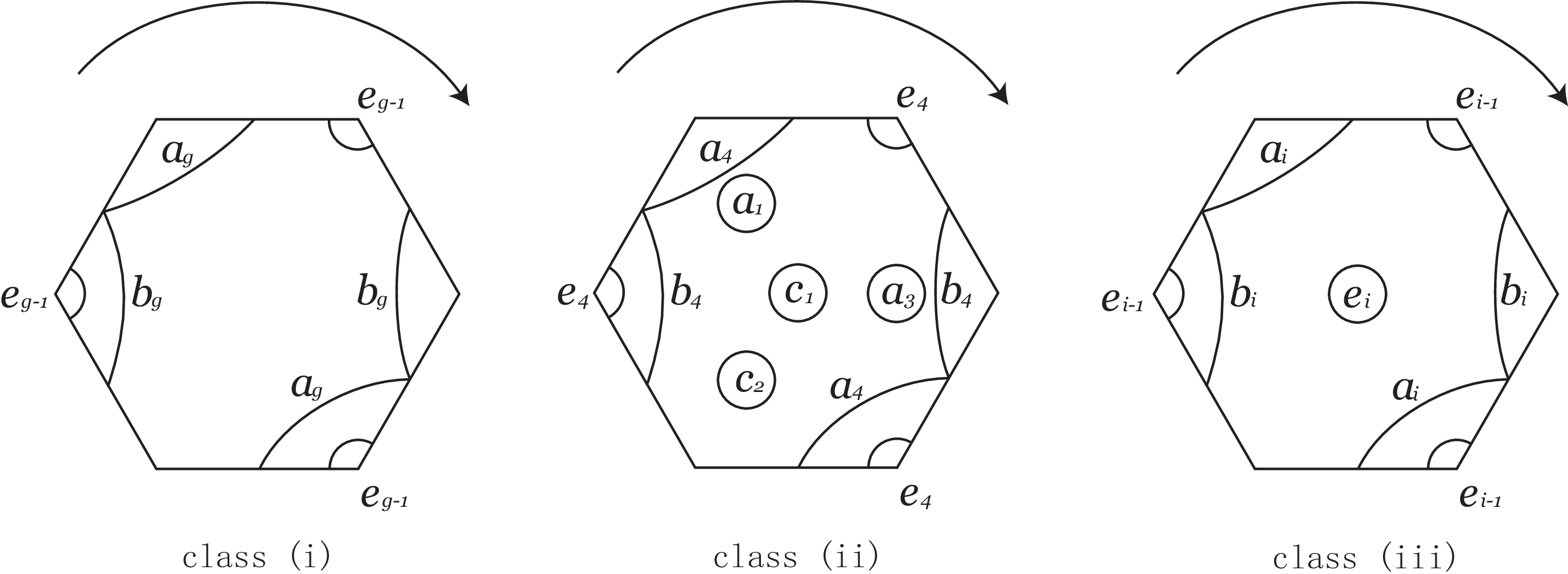}}
\vspace*{8pt}
\caption{order 3 map of the torus.\label{Fig. 6}}
\end{figure}

Now at the beginning, we have the order 3 homeomorphism $h$ on the lantern.
Then we can extend the order 3 homeomorphism to the adjacent piece,
inducing an order 3 homeomorphism on the rest component of the boundary of the adjacent piece.
Then piece by piece, we make a global order 3 homeomorphism $f_3$ of the genus $g$ surface.
All the $a_i\,'s$, $b_i\,'s$, $c_i\,'s$ is in the same orbit under the action of the group
$G$ generated by $\{f_1$, $f_2$, $T_{a_1}f_2T_{a_1}^{-1}$, $f_3\}$.
$T_{a_1}$ is in $G$. Hence $G=\text{Mod}(S_g)$.

In the case of genus $g=3$, the complement of the lantern bounded by $a_1, a_3, c_2, c_1$
is a surface of genus zero having four boundaries. The global order 3 homeomorphism $f_3$ on
complement of the lantern can only be the same form as $h$.
$f_3$ cannot send some $a_i$ to some $b_j$.
We need one more involution to send some $a_i$ to some $b_j$.
Now $a_3$ and $b_3$ are two non-separating curves on the surface.
So there is a homeomorphism $\sigma=T_{a_3}T_{b_3}$ sending $a_3$ to $b_3$ and fixing $a_1, b_1, a_2, b_2$.
Then $\sigma^{-1}f_1\sigma$ is an involution,
sending $a_3$ to $b_2$. The group generated by $\{f_1$, $f_2$, $T_{a_1}f_2T_{a_1}^{-1}$,
$f_3$, $\sigma^{-1}f_1\sigma\}$ is $\text{Mod}(S_3)$.

\end{proof}

\begin{remark}
The idea of the construction of the order 3 mapping class in the proof is based on the method in \cite{Mo} by Naoyuki Monden.
\end{remark}


\begin{thebibliography}{ABCDEF}
 \bibitem{BF} T. E. Brendle and B. Farb, Every mapping class group is generated by
   3 torsion elements and by 6 involutions. J. Algebra 278 (2004), 187--198.
 \bibitem{De} M. Dehn, Papers on group theory and topology (Springer-Verlag, New York,
   1987) (Die Gruppe der Abbildungsklassen, Acta Math. Vol. 69 (1938), 135--206).
 \bibitem{FM} B. Farb and D. Marglait, A Primer on Mapping Class Groups. Princeton
   Math. Ser., (Princeton University Press, 2012), 623--658.
 \bibitem{Hu} S. P. Humphries, Generators for the mapping class group. Topology of
   low-dimensional manifolds. Proc. Second Sussex Conf. Chelwood Gate 1977 Lecture
   Notes in Math. 722 (Springer, 1979), 44--47.
 \bibitem{Jo} D. Johnson, The structure of Torelli group I: A finite set of
   generators for I. Ann. of Math. 118 (1983), 423--442.
 \bibitem{Ka} M. Kassabov, Generating Mapping Class Groups by Involutions.
   arXiv:math.GT/0311455, 2003.
 \bibitem{Ko} M. Korkmaz, Generating the surface mapping class group by two elements.
   Trans. Amer. Math. Soc. 357 (2005), 3299--3310.
 \bibitem{Li} W. B. R. Lickorish, A finite set of generators for the homeotopy group
   of a 2-manifold. Proc. Camb. Phils. Soc. 60 (1964), 769--778.
 \bibitem{Lu} F. Luo, Torsion Elements in the Mapping Class Group of a Surface.
   arXiv:math.GT/0004048, 2000.
 \bibitem{Mo} N. Monden, Generating the mapping class group by torsion elements of
   small order. Mathematical Proceedings of the Cambridge Philosophical Society,
   154 (2013), pp 41--62, doi:10.1017/S0305004112000357
 \bibitem{St} M. Stukow, The extended mapping class group is generated by 3 symmetries.
   C. R. Math. Acad. Sci. Paris 338 (2004), no. 5, 403--406.
 \bibitem{Wa} B. Wajnryb, Mapping class group of a surface is generated by two
   elements. Topology, 35(2), 377--383, 1996.

\end{thebibliography}
\end{document}